\newtheorem{theorem}{Theorem}[section]
\newtheorem{lemma}[theorem]{Lemma}
\newtheorem{proposition}[theorem]{Proposition}
\theoremstyle{definition}
\newtheorem{definition}[theorem]{Definition}
\theoremstyle{remark}
\newtheorem{remark}[theorem]{Remark}
\numberwithin{equation}{section}
\newcommand{\halffrac}{{\frac{1}{2}}}
\newcommand{\alphafrac}{{\frac{\alpha}{2}}}
\newcommand{\mr}{\mathbb{R}}
\newcommand{\prob}{\mathbb{P}}
\begin{document}

\title[On the LIL for Brownian motion on compact manifolds]{On the law of the iterated logarithm for Brownian motion on compact manifolds}

%    Only \author and \address are required; other information is
%    optional.  Remove any unused author tags.

%    author one information
% \author[short version for running head]{name for top of paper}
\author{Cheng Ouyang}
\address{Department of Mathematics, Statistics, and Computer Science \\ University of Illinois at Chicago \\ 322 Science and Engineering Offices (M/C 249) \\ 851 S. Morgan Street \\ Chicago, Illinois 60607}
\email{couyang@math.uic.edu}
\thanks{The research of the first author is supported in part {by Simons} grant \#355480.}

%    author two information
\author{Jennifer Pajda-De La O}
\address{Department of Mathematics, Statistics, and Computer Science \\ University of Illinois at Chicago \\ 322 Science and Engineering Offices (M/C 249) \\ 851 S. Morgan Street \\ Chicago, Illinois 60607}
\email{jpajda2@uic.edu}
\thanks{}

%    \subjclass is required.
\subjclass[2010]{Primary 60B10, 60D05}
	%60B10: Convergence of probability measures
	%58D17: Manifolds of metrics (esp. Riemannian)

%\date{\today}

\dedicatory{}

%    "Communicated by" -- provide editor's name; required.
\commby{}

%    Abstract is required.
\begin{abstract}
By taking a functional analytic point of view, we consider a family of distributions  (continuous linear functionals on smooth functions), denoted by  $\{\mu_t,t>0\}$, associated to the law of iterated logarithm for Brownian motion on a compact manifold. We give a complete characterization of the collection of  limiting distributions of $\{\mu_t, t>0\}$.
\end{abstract}

\maketitle

%    Text of article.

\section{Introduction}

Let $M$ be a compact $C^\infty$-Riemannian manifold (without boundary). For any fixed $x\in M$, it is well-known that the Laplace-Beltrami operator $\triangle_M$ generates a unique diffusion process $X$ starting from $x$, which is called the Brownian motion on $M$ starting from $x$. It is a continuous, strong Markov process with transition density $p(t,x,y)$, the fundamental solution of 
$$\frac{\partial}{\partial t}p(t,x,\cdot)=\frac{1}{2}\triangle_Mp(t,x,\cdot)\,.$$

Denote by $m$ the volume measure on $M$ induced by the metric and $m_0=m(M)$. Since $m/m_0$ is the invariant probability measure for $X$, the well-known ergodic theorem implies that for all $f\in L^1(M)$, almost surely
\begin{align*}
\lim_{t \rightarrow \infty} \frac{1}{t} \int_0^t f(X_s) ds = \frac{1}{m_0} \int_M f dm.
\end{align*}
Hence, $\int_0^t f(X_s) \ ds$ blows up to infinity with a rate of $t$ and scalar $\frac{1}{m_0} \int_M f dm$.  The second order term is given by 
\[ \int_0^t f(X_s) ds - \frac{t}{m_0} \int_M f dm. \] 
When $t$ tends to infinity, the magnitude of the above is {characterized} by the law of the iterated {logarithm} (see, for example \cite{BB1976} and \cite{Brosamler}). More precisely, we have almost surely
\begin{align}\label{LIL}
\limsup_{t\to\infty}\frac{\int_0^tf(X_s)ds-m_0^{-1}t\int_Mfdm}{\sqrt{2t\log\log t}}=\sqrt{\frac{2}{m_0}(Gf,f)_{L^2}}.
\end{align}
In the above, $(\cdot,\cdot)_{L^2}$ is the standard inner product associated to $L^2(M)$, and $G$ is the Green operator introduced  by Baxter and Brosamler in \cite{BB1976} (see Section 2 below for an explicit definition of $G$). It took some effort to show that (\ref{LIL}) is true simultaneously for all $f\in C^\infty(M)$. Indeed, the following was proved by Brosamler in \cite{Brosamler}.
\begin{theorem}\label{LIL all}
We have
\begin{align}\label{LIL uniform}
\prob\bigg\{\limsup_{t\to\infty}&\frac{\int_0^tf(X_s)ds-m_0^{-1}t\int_Mfdm}{\sqrt{2t\log\log t}}\\
&\quad\quad\quad\quad\quad\quad\quad=\sqrt{\frac{2}{m_0}(Gf,f)_{L^2}}, \ \mathrm{all}\ f\in C^\infty\bigg\}=1.\nonumber
\end{align}
\end{theorem}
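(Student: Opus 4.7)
The plan is to combine an It\^o--Dynkin martingale representation of the centered time integral with separability of $C^\infty(M)$ and a continuity argument, in order to upgrade (\ref{LIL}) -- which for each fixed $f$ already holds outside an $f$-dependent null set -- to a single null set working simultaneously for all smooth $f$.

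First, I would rewrite the numerator of (\ref{LIL}) by solving the Poisson equation $-\tfrac12\triangle_M u = f - m_0^{-1}\int_M f\,dm$ with $u=Gf$ and applying It\^o's formula on~$M$ to obtain
\[
  \int_0^t f(X_s)\,ds-\frac{t}{m_0}\int_M f\,dm = u(X_0)-u(X_t)+M_t^f,
\]
where $M_t^f$ is a continuous martingale with quadratic variation $\langle M^f\rangle_t=\int_0^t|\nabla u|^2(X_s)\,ds$. Since $u=Gf$ is smooth (by elliptic regularity) and bounded on the compact manifold, the boundary term $u(X_0)-u(X_t)$ is absorbed by the denominator $\sqrt{2t\log\log t}$. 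The Birkhoff ergodic theorem gives $\langle M^f\rangle_t/t\to m_0^{-1}\int_M|\nabla u|^2\,dm = 2m_0^{-1}(Gf,f)_{L^2}$ (the last equality by integration by parts), and the LIL for continuous martingales -- via a Dubins--Schwarz time change to a standard Brownian motion -- recovers (\ref{LIL}) with the correct constant $\sqrt{(2/m_0)(Gf,f)_{L^2}}$ for each fixed~$f$.

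Second, to upgrade to a single null set, I would fix a countable $\mathbb Q$-linear subspace $\mathcal D\subset C^\infty(M)$ that is dense in some Sobolev space $H^s(M)$ with $s$ so large that $G\colon H^s\to C^1$ is bounded; concretely, $\mathcal D$ can be taken as rational linear combinations of eigenfunctions of $\triangle_M$. Applying (\ref{LIL}) to each $g\in\mathcal D$ produces null sets $N_g$ whose countable union $N$ remains negligible, and on $\Omega\setminus N$ equation (\ref{LIL}) holds for every $g\in\mathcal D$. Setting $A_t(f):=[\int_0^t f(X_s)\,ds-m_0^{-1}t\int_M f\,dm]/\sqrt{2t\log\log t}$ and $\phi(f):=\sqrt{(2/m_0)(Gf,f)_{L^2}}$, linearity of $A_t$ together with (\ref{LIL}) applied to the differences $f-g\in\mathcal D$ yields, on $\Omega\setminus N$, the Lipschitz-type bound
\[
  \bigl|\textstyle\limsup_{t\to\infty}A_t(f)-\limsup_{t\to\infty}A_t(g)\bigr|\le\phi(f-g),\qquad f,g\in\mathcal D.
\]

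The main obstacle -- and the step requiring the most care -- is extending this Lipschitz bound from $\mathcal D$ to all of $C^\infty(M)$, since $\limsup_t A_t(\cdot)$ is not a priori continuous in $f$. My approach would be to exploit the martingale representation a second time. For $f\in C^\infty(M)$ and $f_n\in\mathcal D$ with $\|f-f_n\|_{H^s}\to0$, elliptic regularity forces $\nabla G(f-f_n)\to 0$ uniformly on $M$, so $\langle M^f-M^{f_n}\rangle_t/t\to 2m_0^{-1}(G(f-f_n),f-f_n)_{L^2}=\phi(f-f_n)^2$. Since all the martingales $\{M^g:g\in H^s\}$ live on the same filtered probability space, one sets up a single Dubins--Schwarz/Strassen-type null set $N'$ outside which $\limsup_{t\to\infty}|M_t^f-M_t^{f_n}|/\sqrt{2t\log\log t}\le c\,\phi(f-f_n)$ holds uniformly in~$n$ and in~$f$ drawn from a fixed $H^s$-ball. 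Sending $n\to\infty$ on $\Omega\setminus(N\cup N')$ then yields $\limsup_t A_t(f)=\phi(f)$ for every $f\in C^\infty(M)$, which is (\ref{LIL uniform}).
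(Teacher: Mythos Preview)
The paper does not prove Theorem~\ref{LIL all} at all; it quotes it as a result of Brosamler \cite{Brosamler} and then uses it (together with the uniform estimate stated here as Theorem~\ref{key conti est}, again taken from \cite{Brosamler}) as input for the proof of Theorem~\ref{Main thm}. So there is no ``paper's proof'' to compare against, and your proposal should be judged on whether it would succeed as an independent argument.

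Your reduction of the single-$f$ law of the iterated logarithm to the martingale LIL via the Poisson equation $u=Gf$, It\^o's formula, the ergodic theorem for $\langle M^f\rangle_t/t$, and a Dubins--Schwarz time change is correct and standard. The upgrade to a countable dense $\mathbb{Q}$-linear set $\mathcal D$ is also fine: countable unions of null sets and subadditivity of $\limsup$ give the Lipschitz bound $|\limsup_tA_t(f)-\limsup_tA_t(g)|\le\phi(f-g)$ on $\mathcal D$.

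The gap is precisely where you flag ``the main obstacle''. To pass from $\mathcal D$ to all of $C^\infty(M)$ you need, on a \emph{single} null set, a bound of the form $\limsup_t|A_t(h)|\le c\,\phi(h)$ (or at least $\le c\,\|h\|$ in some norm) valid simultaneously for the uncountably many differences $h=f-f_n$. Your sentence ``one sets up a single Dubins--Schwarz/Strassen-type null set $N'$'' does not do this: the Dubins--Schwarz Brownian motion $B^h$ attached to $M^h$ genuinely depends on $h$, so the LIL null set for $B^h$ depends on $h$ as well, and Strassen's theorem for the fixed driving $\mathbb{R}^d$-valued $W$ controls $W_{t\cdot}/\sqrt{2t\log\log t}$ but not arbitrary stochastic integrals $\int_0^t\sigma_s\,dW_s$ uniformly over adapted $|\sigma|\le1$. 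In fact the bound you assert, with the $H^{-1}$-type seminorm $\phi$, is \emph{stronger} than what Brosamler actually proves: his uniform estimate (Theorem~\ref{key conti est} here) is $|A_t(f)|\le C(\omega)\|f\|_{H_0^\alpha}$ only for $\alpha>\max(d-3/2,d/2)$, obtained via nontrivial chaining/entropy arguments over the eigenfunction expansion. That result is the real content of \cite{Brosamler}, and your sketch presupposes (a sharper form of) it rather than supplying it. Once such a uniform continuity estimate is in hand, your density argument goes through; without it, the step from $\mathcal D$ to $C^\infty(M)$ is not justified.
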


\bigskip
The present work is concerned with the family of signed measures $\{\mu_t, t>0\}$ on $M$ obtained by
\begin{align*}
\int_M fd\mu_t=\frac{\int_0^tf(X_s)ds-m_0^{-1}t\int_Mfdm}{\sqrt{2t\log\log t}}.
\end{align*}
With Theorem \ref{LIL all} in mind, we would rather think { of} $\mu_t$ as a distribution (continuous linear functional) on $C^{\infty}(M)$ and write, for all $f\in C^\infty(M)$
$$\mu_t(f)=\int_Mfd\mu_t.$$
Of {course,} $\mu_t$ also depends on the sample path $\omega\in\Omega$ of $X$. In the event that we want to emphasize such  dependence, we will write $\mu_t^\omega$. Throughout this paper, we use the term {\it distributions} exclusively for continuous linear functionals on $C^\infty(M)$. It is then natural to wonder {what the limiting distributions of the family $\{\mu_t\}$ are}. More precisely, we are interested in, for each $\omega\in\Omega$, how one can characterize the class of distributions $\mu$ on $C^\infty(M)$  such that there exists a sequence $\{t_n, n\geq1\}$
\begin{align}\label{testing}\mu_{t_n}^\omega(f)\to \mu(f),\quad\quad \mathrm{all}\ f\in C^\infty(M).\end{align}
The result of our investigation on the above question is reported in the next theorem.
\begin{theorem}\label{Main thm}
Let $\mathscr{D}$ be the collection of distributions on $C^\infty(M)$ satisfying the following four properties. 
\begin{itemize}
\item[(a)] $\mu$ can be identified as a signed measure on $M$, still denoted by $\mu$;
\item[(b)] $\mu(M)=0$;
\item[(c)] $\mu$ is absolutely continuous with respect to the volume measure $m$, with Radon-Nikodym derivative $g=d\mu/dm$  in $L^2(M)$. Moreover,
\item[(d)] $g$ is in the domain of $G_{1/2}^{-1}$, and $$\|G_{\frac{1}{2}}^{-1}g\|_{L^2}\leq \sqrt{\frac{2}{m_0}}.$$
\end{itemize}
Here, roughly speaking, $G_{1/2}=(-\triangle_M/2)^{-1/2}$, and is defined more precisely in the next section.

Then, almost surely, the class of limiting distributions of the family $\{\mu_t^\omega,t>0\}$ is exactly $\mathscr{D}$. 
\end{theorem}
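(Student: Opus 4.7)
The proof naturally splits into proving two inclusions. For the \emph{necessity} direction, I fix $\omega$ in the full-measure set from Theorem~\ref{LIL all} and suppose $\mu_{t_n}^\omega(f)\to\mu(f)$ for all $f\in C^\infty(M)$. Testing against $f\equiv 1$ yields (b), while applying Theorem~\ref{LIL all} to both $\pm f$ gives
\begin{align*}
|\mu(f)|\leq\sqrt{\tfrac{2}{m_0}(Gf,f)_{L^2}}=\sqrt{\tfrac{2}{m_0}}\,\|G_{1/2}f\|_{L^2}
\end{align*}
for every $f\in C^\infty(M)$. This shows that $G_{1/2}f\mapsto\mu(f)$ is a bounded linear functional on the dense subspace $G_{1/2}(C^\infty(M))$ of the mean-zero part of $L^2(M)$, so Riesz representation produces $h\in L^2(M)$ with $\|h\|_{L^2}\leq\sqrt{2/m_0}$ such that $\mu(f)=(G_{1/2}f,h)_{L^2}=(f,G_{1/2}h)_{L^2}$ by self-adjointness of $G_{1/2}$. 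Setting $g:=G_{1/2}h$ identifies $\mu$ with the signed measure $g\,dm$, and $h=G_{1/2}^{-1}g$ delivers (a), (c), (d) simultaneously.

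For the \emph{sufficiency} direction, I fix an $L^2$-orthonormal basis $\{e_k\}_{k\geq 0}$ of eigenfunctions of $-\triangle_M$ with eigenvalues $0=\lambda_0<\lambda_1\leq\lambda_2\leq\cdots$, and note that any $\mu\in\mathscr{D}$ expands as $g=\sum_{j\geq 1}\hat g_j e_j$ with $\sum_{j\geq 1}(\lambda_j/2)\hat g_j^{\,2}\leq 2/m_0$, while $\mu(e_j)=\hat g_j$. Applying Theorem~\ref{LIL all} to all finite linear combinations $\sum_{j=1}^{N}c_j e_j$ simultaneously yields a Strassen-type multidimensional LIL: almost surely, for each $N$, the limit-point set of the vector $(\mu_t^\omega(e_1),\ldots,\mu_t^\omega(e_N))$ is exactly the ellipsoid
\begin{align*}
\mathcal{E}_N:=\Bigl\{(a_1,\ldots,a_N)\in\mr^N\,:\,\sum_{j=1}^{N}\tfrac{\lambda_j}{2}a_j^{\,2}\leq\tfrac{2}{m_0}\Bigr\}.
\end{align*}
Since $(\hat g_1,\ldots,\hat g_N)\in\mathcal{E}_N$, I extract $t_n^{(N)}\to\infty$ realizing this vector, and a standard diagonal argument yields one sequence $t_n\to\infty$ with $\mu_{t_n}^\omega(e_j)\to\hat g_j$ for every $j\geq 1$.

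Upgrading basis-wise convergence to convergence on all $f\in C^\infty(M)$ is the crux. For $f=\sum\hat f_j e_j$ the coefficients $\hat f_j$ decay faster than any inverse power of $\lambda_j$, so I would split
\begin{align*}
\mu_{t_n}^\omega(f)-\mu(f)=\sum_{j\leq N}\hat f_j\bigl[\mu_{t_n}^\omega(e_j)-\hat g_j\bigr]+\sum_{j>N}\hat f_j\mu_{t_n}^\omega(e_j)-\sum_{j>N}\hat f_j\hat g_j,
\end{align*}
handle the first sum by basis-wise convergence (fix $N$, send $n\to\infty$), the last by Cauchy--Schwarz and $g\in L^2$, and the middle via a Sobolev-type estimate
\begin{align*}
\Bigl|\sum_{j>N}\hat f_j\mu_{t_n}^\omega(e_j)\Bigr|\leq\Bigl(\sum_{j>N}(1+\lambda_j)^{s}\hat f_j^{\,2}\Bigr)^{1/2}\Bigl(\sum_{j>N}(1+\lambda_j)^{-s}|\mu_{t_n}^\omega(e_j)|^{2}\Bigr)^{1/2}.
\end{align*}
The main obstacle is bounding the last factor uniformly in $n$: the LIL only delivers $|\mu_t^\omega(e_j)|\leq C/\sqrt{\lambda_j}$ pointwise in $j$ for $t$ large depending on $j$, so a genuinely uniform $H^{-s}$-type estimate on $\mu_{t_n}^\omega$ likely requires combining the martingale representation $\int_0^t e_j(X_s)\,ds=-M_t^{(j)}+u_j(X_t)-u_j(X_0)$ (with $u_j=-Ge_j$) with a Burkholder--Davis--Gundy bound applied uniformly to the family $\{M^{(j)}\}_{j\geq 1}$.
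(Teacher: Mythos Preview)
Your overall architecture matches the paper's exactly: split into necessity and sufficiency, with sufficiency proceeding via a multidimensional Strassen cluster-set result on an eigenbasis, a diagonal extraction, and a density/uniform-continuity upgrade. Your necessity argument is correct and in fact slightly cleaner than the paper's (you produce $h$ first and set $g=G_{1/2}h$, whereas the paper first gets $g$ via Riesz on $L^2$ and then separately shows $g\in\mathrm{Dom}(G_{1/2}^{-1})$ by a duality computation).

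There are, however, two genuine gaps in the sufficiency direction, and both are precisely the places where the paper invokes external results of Brosamler rather than Theorem~\ref{LIL all}.

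\emph{First}, the claim that the scalar LIL (Theorem~\ref{LIL all}) applied to all finite linear combinations yields the full Strassen statement---that the cluster set of $(\mu_t(e_1),\ldots,\mu_t(e_N))$ \emph{equals} $\mathcal{E}_N$---is not justified. Knowing $\limsup_t c\cdot V_t = h_{\mathcal{E}_N}(c)$ for every direction $c$ shows the cluster set lies in $\mathcal{E}_N$ and contains every boundary point, but it does \emph{not} force the interior to be hit: a continuous path winding around $\partial\mathcal{E}_N$ satisfies the same limsup identities with cluster set equal to the sphere only. The equality of the cluster set with the full ellipsoid is a separate, deeper fact; the paper imports it as Lemma~\ref{lemma cluster} (Brosamler's Theorem~4.6).

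\emph{Second}, the uniform-in-$t$ Sobolev bound you correctly flag as ``the main obstacle'' is exactly the content of Theorem~\ref{key conti est} (Brosamler's Theorem~3.8): almost surely, $|\mu_t(f)|\leq C(\omega)\|f\|_{H_0^\alpha}$ for all $t\geq 3$ and all $f\in H_0^\alpha$, whenever $\alpha>\max(d-3/2,d/2)$. With this in hand the upgrade step is a one-line density argument in $H_0^\alpha$, and your proposed BDG route is unnecessary. In summary: your proof is structurally correct and aligned with the paper, but at the two critical junctures you should cite Lemma~\ref{lemma cluster} and Theorem~\ref{key conti est} rather than attempt to derive them from Theorem~\ref{LIL all}.
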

Clearly, the above theorem gives a complete characterization of the collection of limiting distributions of $\{\mu_t\}$.

Careful readers may wonder whether {Theorem} \ref{Main thm} remains valid if we replace $C^\infty(M)$ in (\ref{testing}) by $C(M)$, the collection of (bounded) continuous functions on $M$. That is, we regard $\{\mu_t\}$ and $\mu$ in Theorem \ref{Main thm} as genuine signed measures and consider {weak} convergence in the space of signed measures, as opposed {to convergence} in the space of distributions.  {The main reason why we do not work in the former setting in the present work is due to the fact that, in that case, Theorem \ref{Main thm} is intimately related to a version of {the law of the iterated logarithm} that holds simultaneously for all {$f\in C(M)$}}. Whether one has such an iterated logarithm is a non-trivial question. It seems the best result {in} this direction is obtained by Brosamler in \cite{Brosamler} for the Sobolev spaces $H^\alpha_0$ with $\alpha>\max(d-3/2, d/2)$, where $d$ is the dimension of $M$ and $H_0^\alpha$ is that in Definition \ref{H space} below.

\bigskip

The rest of this paper has two sections. In Section 2, we provide some preliminary material that will be needed for our discussion later. In particular, we introduce some operators and Sobolev spaces associated to Brownian motion on manifolds.
The proof of Theorem 1.2 is detailed in Section 3.

\section{Brownian motion on manifolds and Sobolev spaces}

Throughout our discussion below, we assume that $M$ is a compact Riemannian manifold of dimension $d$. Fix any $x\in M$, let $X=\{X_t, t\geq 0\}$ be a Brownian motion on $M$ starting from $x$; that is, $X$ is the unique diffusion process generated by the Laplace-Beltrami operator $\triangle_M$. Denote by $p(t,x,y)$ its  probability transition density function. In this section, we briefly introduce some operators and Sobolev spaces associated to $X$, that will be needed in the sequel. A more detailed discussion on these materials can be found, e.g., in \cite{Brosamler}.

We first introduce the Green kernel
\begin{align}\label{g}
g(x,y)=\int_0^\infty (p(t,x,y)-m_0^{-1})\,dt,\quad\quad x,y\in M, x\not=y.
\end{align}
Clearly $g(x,y)$ inherits its symmetry in $x$ and $y$ from $p(t,x,y)$.  It is also not hard to see that $g$ is continuous off the diagonal of $M\times M$.  Since for large $t$, there exists $\alpha>0$  and $C>0$ such that (see, e.g., \cite{BB1976})
$$\sup_{x,y\in M}|p(t,x,y)-m_0^{-1}|\leq Ce^{-\alpha t},$$
and for small $t$, $p(t,x,y)$  is known to have an order (see, e.g., \cite{M1953} and \cite{Hsu})
$$ (2\pi t)^{-\frac{d}{2}}e^{-\frac{d(x,y)^2}{2t}}.$$
Here $d(x,y)$ is the Riemannian distance between $x$ and $y$. Moreover, with some extra work, one can show that $g(x,y)$ is $C^\infty$ off the diagonal.

More generally, we introduce for $\alpha>0$ the kernel
\begin{align}\label{g alpha}
g_\alpha(x,y)=\Gamma(\alpha)^{-1}\int_0^\infty t^{\alpha-1}(p(t,x,y)-m_0^{-1})\,dt,\quad\quad x,y\in M, x\not=y.
\end{align}
Obviously $g_1(x,y)=g(x,y)$, and $g_\alpha(x,y)$ is symmetric in $x$ and $y$. The semigroup property of $p(t,x,y)$ implies that 
$$\int_Mg_\alpha(x,z)g_\beta(z,y)m(dz)=g_{\alpha+\beta}(x,y),\quad \mathrm{for}\ \alpha, \beta>0.$$
Hence, for any $f\in L^2(M)$, letting
$$(G_\alpha f)(x)=\int_Mg_\alpha(x,y)f(y)m(dy),$$
we obtain a semigroup of bounded symmetric linear operators $G_\alpha$ on $L^2(M)$. In particular, we denote $G=G_1$ which is the Green operator introduced in \cite{BB1976}.

In the following,  we let
$$L^2_0(M)=\left\{f\in L^2(M): \int_Mfdm=0\right\},$$
{and}
$$C^\infty_0(M)=\left\{f\in C^\infty(M): \int_Mfdm=0\right\}.$$
For simplicity, we will suppress $M$ in the notation when there is no danger of possible confusion.

\begin{definition}\label{H space}
For any $\alpha>0$, let $H_0^\alpha=G_{\alpha/2}(L^2_0)$, with inner product
\begin{align*}
{  \left( G_{\alphafrac}f_1, G_{\alphafrac}f_2 \right)_{H_0^\alpha}=2^\alpha(f_1, f_2)_{L^2}.   }
\end{align*}
We write $\|\cdot\|_{H_0^\alpha}$ for the norm induced by $(\cdot, \cdot)_{H_0^\alpha}$.
\end{definition}
It is known (see, e.g., \cite{Brosamler}) that for $\alpha_1<\alpha_2$, $H_0^{\alpha_2}$ is continuously embedded into $H_0^{\alpha_1}$, and $\cap_{\alpha>0}H_0^\alpha=C^\infty_0$. Moreover, the Sobolev spaces $H_0^\alpha$ are the completion of $C_0^\infty$ with respect to the norm $\|\cdot\|_{H_0^\alpha}$.

\smallskip
The characterization of operators $G_\alpha$ and spaces $H_0^\alpha$ is probably more familiar to some readers in a functional analytic setting.
Denote by $0=\lambda_0<\lambda_1\leq\lambda_2\leq...$ the eigenvalues of $-\triangle_M$ and by $\phi_0={m_0^{-1/2}}, \phi_1,\phi_2,...$ an orthonormal sequence of corresponding eigenfunctions.
\begin{proposition}\label{Fourier}
The following facts are well-known.
\begin{enumerate}
\item $\phi_n\in C^\infty$, $n\geq 0$.
\item $G_\alpha \phi_0=0$ and $$G_\alpha \phi_n=2^\alpha\lambda_n^{-\alpha}\phi_n,$$ for $\alpha>0$.
\item Set $$\phi_n^\alpha=\lambda_n^{-\alpha/2}\phi_n.$$ For all $\alpha>0$, the functions $\{\phi_n^\alpha, n\geq1\}$ form a complete orthonormal system in $H_0^\alpha$.
\item For any $f\in L^2_0$, let $f_n=(f,\phi_n)_{L^2}$. We have $$f=\sum_{n=1}^\infty f_n\phi_n.$$ Moreover, $f$ belongs to $H_0^\alpha$ if and only if $$\sum_{n=1}^\infty \lambda^\alpha_n f_n^2<\infty.$$
\item For $f\in H^\alpha_0$, $$\|f\|_{H_0^\alpha}^2=\sum_{n=1}^\infty \lambda_n^\alpha f_n^2.$$
\end{enumerate}
\end{proposition}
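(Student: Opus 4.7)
The plan is to deduce everything from two ingredients: elliptic regularity for $-\triangle_M$, and the Mercer-type spectral expansion of the heat kernel
$$p(t,x,y) = \sum_{n=0}^\infty e^{-\lambda_n t/2}\phi_n(x)\phi_n(y),$$
which converges uniformly on $M\times M$ for $t$ bounded away from zero and, after subtracting the $n=0$ term, in $L^2(M\times M)$ for all $t>0$. Item (1) follows by elliptic regularity: the equation $(-\triangle_M - \lambda_n)\phi_n = 0$ on a compact manifold has only smooth $L^2$ solutions. I would quote this standard fact rather than reprove it.

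For (2), since $\phi_0 = m_0^{-1/2}$ and $\lambda_0 = 0$, the zeroth term of the heat expansion is exactly $m_0^{-1}$, so
$$p(t,x,y) - m_0^{-1} = \sum_{n=1}^\infty e^{-\lambda_n t/2}\phi_n(x)\phi_n(y).$$
Substituting into the definition of $g_\alpha$ and interchanging sum and integral, together with the elementary identity $\Gamma(\alpha)^{-1}\int_0^\infty t^{\alpha-1}e^{-\lambda_n t/2}\,dt = 2^\alpha\lambda_n^{-\alpha}$, yields the spectral expansion $g_\alpha(x,y) = \sum_{n\geq 1} 2^\alpha\lambda_n^{-\alpha}\phi_n(x)\phi_n(y)$. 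Applying this kernel to $\phi_n$ and using $L^2$-orthonormality gives $G_\alpha\phi_n = 2^\alpha \lambda_n^{-\alpha}\phi_n$ for $n \geq 1$, while $G_\alpha\phi_0 = 0$ because $\phi_0$ is constant and $\int_M(p(t,x,y) - m_0^{-1})\,dm(y) = 0$ for every $t$.

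Item (3) then reduces to Hilbert space bookkeeping. By (2), $\phi_n^\alpha = 2^{-\alpha/2}G_{\alpha/2}\phi_n$, and the defining identity $(G_{\alpha/2}f_1, G_{\alpha/2}f_2)_{H_0^\alpha} = 2^\alpha (f_1,f_2)_{L^2}$ immediately gives $(\phi_m^\alpha,\phi_n^\alpha)_{H_0^\alpha} = \delta_{mn}$. Completeness of $\{\phi_n^\alpha\}_{n \geq 1}$ in $H_0^\alpha$ is inherited from completeness of $\{\phi_n\}_{n\geq 1}$ in $L_0^2$ together with the surjectivity of $G_{\alpha/2}\colon L_0^2 \to H_0^\alpha$ (which is built into the definition $H_0^\alpha = G_{\alpha/2}(L_0^2)$) and the fact that this map is, up to the factor $2^{\alpha/2}$, an isometry. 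With (3) in hand, items (4) and (5) are standard: the $L_0^2$ expansion in (4) is Parseval; any $f\in H_0^\alpha$ has an $H_0^\alpha$-ONB expansion $f = \sum a_n\phi_n^\alpha$, and because $H_0^\alpha$ embeds continuously into $L^2$, matching with the $L^2$-Fourier coefficients forces $a_n = \lambda_n^{\alpha/2}f_n$, whence $\|f\|_{H_0^\alpha}^2 = \sum a_n^2 = \sum \lambda_n^\alpha f_n^2$; the $\ell^2$-summability of $\{a_n\}$ is precisely the characterization of membership in $H_0^\alpha$.

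The main technical nuisance I anticipate is justifying the interchange of summation and integration in the spectral formula for $g_\alpha$ when $\alpha < 1$, because the small-$t$ behavior of $p(t,x,y)$ is singular on the diagonal. My plan is to bound the partial sums of $p(t,x,y) - m_0^{-1}$ in $L^2(M\times M)$ by $\sum_{n\geq 1} e^{-\lambda_n t}$ (finite for $t > 0$ by Weyl's law $\lambda_n \asymp n^{2/d}$) and then split the $t$-integral at $t = 1$, handling the large-$t$ tail via the exponential bound $\sup_{x,y}|p(t,x,y)-m_0^{-1}|\le Ce^{-\alpha_0 t}$ recalled in Section 2. Everything else reduces to routine orthonormal-basis manipulations.
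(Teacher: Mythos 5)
The paper offers no proof of this proposition at all: it is stated as a collection of well-known facts (with the background delegated to Brosamler's paper), so there is no argument of the authors' to compare yours against. Your outline is the standard one and is essentially correct: elliptic regularity for (1), the spectral representation of the heat semigroup for (2), the observation that $f\mapsto 2^{-\alpha/2}G_{\alpha/2}f$ is a surjective isometry from $L_0^2$ onto $H_0^\alpha$ for (3), and Parseval plus this isometry for (4) and (5).

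The one step I would push back on is your plan for justifying the termwise $t$-integration in the Mercer expansion of $g_\alpha$. Your proposed bound gives $\|p(t,\cdot,\cdot)-m_0^{-1}\|_{L^2(M\times M)}^2=\sum_{n\ge1}e^{-\lambda_n t}\asymp t^{-d/2}$ by Weyl's law, so the integral $\int_0^1 t^{\alpha-1}\,t^{-d/4}\,dt$ converges only for $\alpha>d/4$; for $\alpha\le d/4$ the kernel $g_\alpha$ is genuinely not square-integrable on $M\times M$ (it blows up like $d(x,y)^{2\alpha-d}$ near the diagonal), so no version of that Hilbert--Schmidt argument can be repaired. This does not endanger item (2), but you should prove it without passing through an $L^2(M\times M)$ expansion of the kernel: apply the definition of $G_\alpha$ directly to $\phi_n$, use $\int_M (p(t,x,y)-m_0^{-1})\phi_n(y)\,m(dy)=e^{-\lambda_n t/2}\phi_n(x)$ (the semigroup acting on an eigenfunction, with the constant term dropping out since $\int_M\phi_n\,dm=0$ for $n\ge1$), and then integrate $t^{\alpha-1}e^{-\lambda_n t/2}$ in $t$, which converges for every $\alpha>0$. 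With that rerouting, the rest of your argument goes through for all $\alpha>0$ as required.
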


\begin{remark}
By the characterization in terms of eigenvalues and {eigenfunctions}, it is clear that $G_\alpha: L^2_0\to H_0^{2\alpha}$ is a self-adjoint operator and, indeed, $G_\alpha= (-\triangle_M/2)^{-\alpha}$.
\end{remark}

Finally, we state one of  the main results in \cite{Brosamler}, which plays a key role in our discussion below. 
\begin{theorem}\label{key conti est}
For $\alpha>\max(d-3/2, d/2)$, we have almost surely
$$\left|\frac{\int_0^t f(X_s)ds}{\sqrt{2t\log\log t}}\right|\leq \|f\|_{H_0^\alpha} C(\omega),\quad t\geq 3, f\in H_0^\alpha.$$
In the above, $C$ is a finite constant depending on sample path $\omega$ (but not on $t$).
\end{theorem}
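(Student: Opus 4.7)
My plan is to reduce the uniform-in-$f$ estimate to a deterministic spectral summability condition, via an expansion of $f$ in the eigenbasis together with an explicit It\^o-based decomposition of each one-dimensional integral. By Proposition \ref{Fourier}(3), $\{\phi_n^\alpha\}_{n\geq 1}$ with $\phi_n^\alpha=\lambda_n^{-\alpha/2}\phi_n$ is a complete orthonormal system in $H_0^\alpha$, so any $f\in H_0^\alpha$ writes as $f=\sum_{n\geq 1}c_n\phi_n^\alpha$ with $\|f\|_{H_0^\alpha}^2=\sum_n c_n^2$. A single Cauchy--Schwarz step gives
$$
\left|\frac{\int_0^t f(X_s)\,ds}{\sqrt{2t\log\log t}}\right|^2 \leq \|f\|_{H_0^\alpha}^2 \sum_{n=1}^\infty \lambda_n^{-\alpha}\left|\frac{\int_0^t \phi_n(X_s)\,ds}{\sqrt{2t\log\log t}}\right|^2,
$$
so the theorem will follow from almost-sure finiteness of
$$
C(\omega)^2 := \sum_{n=1}^\infty \lambda_n^{-\alpha}\sup_{t\geq 3}\left|\frac{\int_0^t \phi_n(X_s)\,ds}{\sqrt{2t\log\log t}}\right|^2.
$$

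For each $n$ I set $u_n:=G\phi_n=2\lambda_n^{-1}\phi_n$, which satisfies $\tfrac{1}{2}\triangle_M u_n=-\phi_n$. It\^o's formula produces the martingale decomposition
$$
\int_0^t \phi_n(X_s)\,ds = u_n(X_0)-u_n(X_t)+M^n_t,
$$
where $M^n$ is a continuous martingale with $\langle M^n\rangle_t=\int_0^t|\nabla u_n(X_s)|^2\,ds\leq t\|\nabla u_n\|_\infty^2$. The boundary piece is bounded deterministically by $2\|u_n\|_\infty$. For the martingale I would use the Bernstein/exponential-martingale inequality
$$
\prob\!\left(\sup_{s\leq T}|M^n_s|\geq \lambda\right)\leq 2\exp\!\left(-\frac{\lambda^2}{2T\|\nabla u_n\|_\infty^2}\right)
$$
on dyadic windows $T=2^k$ with threshold $\lambda_{n,k}\asymp\|\nabla u_n\|_\infty\sqrt{2^k\log(nk)}$, together with Borel--Cantelli summability $\sum_{n,k}\prob(\cdots)\lesssim\sum_{n,k}(nk)^{-2}<\infty$, to obtain almost surely
$$
\sup_{t\geq 3}\frac{|M^n_t|^2}{2t\log\log t}\leq K(\omega)(1+\log n)\|\nabla u_n\|_\infty^2
$$
with a single finite random constant $K(\omega)$ that does not depend on $n$.

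It then remains to sum. Sogge's sup-norm and gradient estimates for eigenfunctions give $\|\phi_n\|_\infty\lesssim\lambda_n^{(d-1)/4}$ and $\|\nabla\phi_n\|_\infty\lesssim\lambda_n^{(d+1)/4}$, hence $\|u_n\|_\infty^2\lesssim\lambda_n^{(d-5)/2}$ and $\|\nabla u_n\|_\infty^2\lesssim\lambda_n^{(d-3)/2}$. Weyl's law $\lambda_n\asymp n^{2/d}$ reduces summability of $\sum_n\lambda_n^{\beta}$ to $\beta<-d/2$, so the gradient contribution $\sum_n\lambda_n^{-\alpha}(\log n)\|\nabla u_n\|_\infty^2$ is summable precisely when $\alpha>(d-3)/2+d/2=d-3/2$ (the logarithmic factor is negligible), while the boundary contribution converges at the weaker threshold $\alpha>d-5/2$. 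The complementary condition $\alpha>d/2$ enters through the Sobolev embedding $H_0^\alpha\hookrightarrow C(M)$, which ensures every $f\in H_0^\alpha$ is continuous so that $\int_0^t f(X_s)\,ds$ is unambiguously defined along each sample path, uniformly in $f$.

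The main obstacle is upgrading the pointwise law of the iterated logarithm \eqref{LIL}, which produces only an $n$-dependent random $\limsup$ for each basis element separately, to a bound on $\sup_{t\geq 3}$ that is summable across $n$. This is precisely what the explicit It\^o decomposition together with the two-parameter Borel--Cantelli argument on dyadic $t$-intervals accomplishes, and it is also what tethers the final Sobolev threshold to the behavior of $\|\nabla u_n\|_\infty$ rather than to the (sharper but not uniform-in-$t$) constant $\sqrt{2(G\phi_n,\phi_n)_{L^2}/m_0}$ supplied by \eqref{LIL}.
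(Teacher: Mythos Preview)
The paper does not actually prove this theorem: its entire proof is the one-line citation ``This is essentially the content of \cite[Theorem~3.8]{Brosamler}.'' So there is no in-paper argument to compare against; you have supplied a genuine proof sketch where the paper defers to the literature.

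Your strategy is sound and recovers the correct threshold $\alpha>\max(d-3/2,\,d/2)$ for the right reasons. The eigenfunction expansion plus Cauchy--Schwarz correctly reduces the problem to controlling $\sum_n\lambda_n^{-\alpha}\sup_{t\ge 3}|L_t(\phi_n)|^2/(2t\log\log t)$; the It\^o decomposition $L_t(\phi_n)=u_n(X_0)-u_n(X_t)+M^n_t$ with $u_n=2\lambda_n^{-1}\phi_n$ is exactly the standard device; and the two-parameter Borel--Cantelli over dyadic $t$-blocks and over $n$, using the exponential martingale inequality with deterministic quadratic-variation bound $\langle M^n\rangle_t\le t\|\nabla u_n\|_\infty^2$, does yield a single random constant $K(\omega)$ (the finitely many ``bad'' $(n,k)$ pairs contribute a finite random multiplicative factor). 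The H\"ormander/Sogge bounds $\|\phi_n\|_\infty\lesssim\lambda_n^{(d-1)/4}$, $\|\nabla\phi_n\|_\infty\lesssim\lambda_n^{(d+1)/4}$ combined with Weyl's law then make the martingale series converge exactly when $\alpha>d-3/2$, and the condition $\alpha>d/2$ is indeed what legitimizes the term-by-term identity $L_t(f)=\sum_n c_n\lambda_n^{-\alpha/2}L_t(\phi_n)$ via the Sobolev embedding $H_0^\alpha\hookrightarrow C(M)$.

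Two small points worth tightening if you write this out in full: (i) the ratio $\log(n k)/\log\log t$ on the dyadic block containing $t$ is of order $1+\log n/\log k$, and the supremum over $t\ge 3$ is attained near the bottom of the range, which is where the factor $(1+\log n)$ genuinely appears---so make that step explicit; (ii) your justification that the eigenfunction series for $f$ can be integrated term-by-term along the path uses uniform convergence, which follows from $\alpha>d/2$, but you should state clearly that this is where the interchange of $\sum_n$ and $\int_0^t$ is performed, since without it the Cauchy--Schwarz step is formal. With those details filled in, your argument is a complete and self-contained substitute for the citation the paper gives.
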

\begin{proof}
This is essentially the content of \cite[Theorem 3.8]{Brosamler}.
\end{proof}

\section{Limiting distributions}
Recall that $X$ is a Brownian motion on a compact manifold $M$ starting from a pre-fixed point $x\in M$. For each $t>0$ and $\omega\in\Omega$, we consider the distribution $\mu_t^\omega$ on $C^\infty(M)$ obtained by
$$\mu_t^\omega(f)=\frac{\int_0^tf(X_s(\omega))ds-m_0^{-1} t\int_Mfdm}{\sqrt{2t\log\log t}},\quad\quad f\in C^\infty(M).$$
To lighten the notation, we usually suppress its dependence on $\omega$ and simply write $\mu_t$. We are interested in understanding the class of limiting distributions (accumulating points) of the family $\{\mu_t\}$. Clearly, in order to prove Theorem \ref{Main thm}, we only need to show the following two theorems hold.

\begin{theorem}\label{Thm 1}
For each  $\omega\in \Omega$, if $\mu$ is a limiting distribution of the family $\{\mu_t^\omega, t\geq 0\}$, then $\mu$ can be identified as a signed measure on $M$, still denoted by $\mu$, such that
\begin{itemize}
\item[(a)] $\mu(M)=0$;
\item[(b)] $\mu$ is absolutely continuous with respect to the volume measure $m$, with Radon-Nikodym derivative $g=d\mu/dm$  in $L^2_0(M)$. Moreover,
\item[(c)] $g$ is in the domain of $G_{1/2}^{-1}$, and \[ { \left\|G_\halffrac^{-1}g\right\|_{L^2}\leq \sqrt{\frac{2}{m_0}}.   }  \]
\end{itemize}
\end{theorem}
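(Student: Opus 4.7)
The approach is to combine Brosamler's simultaneous LIL (Theorem \ref{LIL all}) with Riesz representation on $L^2_0$, channeled through the self-adjoint operator $G_\halffrac$, to produce the required density $g$.

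First, plugging $f\equiv 1$ into the definition of $\mu_t^\omega$ gives $\mu_t^\omega(1)=(t-m_0^{-1}t\cdot m_0)/\sqrt{2t\log\log t}=0$ for every $t$, hence $\mu(1)=0$ for any limit point; once $\mu$ is realized as a signed measure this is exactly (a). Restricting to the full-measure event on which (\ref{LIL uniform}) holds simultaneously for every $f\in C^\infty$, and assuming $\mu_{t_n}^\omega(f)\to \mu(f)$ along some sequence $t_n\to\infty$ for all $f\in C^\infty$, we obtain
\begin{align*}
|\mu(f)|\leq \limsup_{t\to\infty}|\mu_t^\omega(f)| \leq \sqrt{\tfrac{2}{m_0}(Gf,f)_{L^2}}\,,\qquad f\in C^\infty.
\end{align*}
Using $G=G_\halffrac^2$ and the self-adjointness of $G_\halffrac$ (the Remark following Proposition \ref{Fourier}), for $f\in C^\infty_0$ this simplifies to $|\mu(f)|\leq \sqrt{2/m_0}\,\|G_\halffrac f\|_{L^2}$.

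Next I would promote this estimate to a Riesz representation on $L^2_0$. By Proposition \ref{Fourier}, $G_\halffrac$ is a bijection of $C^\infty_0$ onto itself, with inverse acting as $\lambda_n^{1/2}/\sqrt{2}$ on each $\phi_n$ (and preserving the rapid decay of Fourier coefficients). Define $\tilde\mu(h):=\mu(G_\halffrac^{-1} h)$ for $h\in C^\infty_0$; the bound above becomes $|\tilde\mu(h)|\leq \sqrt{2/m_0}\,\|h\|_{L^2}$, so $\tilde\mu$ extends uniquely to a bounded linear functional on $L^2_0$ of norm at most $\sqrt{2/m_0}$. The Riesz representation theorem produces $k\in L^2_0$ with $\|k\|_{L^2}\leq \sqrt{2/m_0}$ such that $\tilde\mu(h)=(h,k)_{L^2}$. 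Setting $g:=G_\halffrac k\in H^1_0$ and substituting $h=G_\halffrac f$, self-adjointness yields
\begin{align*}
\mu(f)=\tilde\mu(G_\halffrac f)=(G_\halffrac f, k)_{L^2}=(f,g)_{L^2}=\int_M f\cdot g\, dm,\qquad f\in C^\infty_0,
\end{align*}
and combined with $\mu(1)=0$ the identity extends to all $f\in C^\infty$.

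Finally, since $g\in L^2(M)\subset L^1(M)$ on the compact manifold $M$, the prescription $A\mapsto \int_A g\, dm$ defines a bona fide signed measure on $M$ whose pairing with smooth test functions reproduces the distribution $\mu$; this realizes $\mu$ as a signed measure and delivers the $L^2_0$-density statement (b). The construction $g=G_\halffrac k$ with $k\in L^2_0$ shows $g$ lies in the range of $G_\halffrac$, i.e.\ in the domain of $G_\halffrac^{-1}$, with $G_\halffrac^{-1}g=k$ and $\|G_\halffrac^{-1}g\|_{L^2}=\|k\|_{L^2}\leq \sqrt{2/m_0}$, giving (c). I anticipate no serious obstacle once the functional-analytic picture is in place; the only points requiring care are the bijectivity of $G_\halffrac$ on $C^\infty_0$ used to define $\tilde\mu$ and the preservation of the operator norm upon extension to $L^2_0$, both of which are immediate consequences of the spectral picture in Proposition \ref{Fourier}.
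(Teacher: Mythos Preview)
Your proof is correct and uses the same ingredients as the paper---the simultaneous LIL bound, the bijectivity of $G_{\halffrac}$ on $C_0^\infty$, self-adjointness, and Riesz representation. The only difference is organizational: the paper first applies Riesz on $L^2(M)$ using the cruder bound $|\mu(f)|\le C\sqrt{2/m_0}\,\|f\|_{L^2}$ (coming from the boundedness of $G$ on $L^2$) to produce the density $g$, and then runs a separate duality argument to show $g$ lies in the domain of $(G_{\halffrac}^{-1})^*=G_{\halffrac}^{-1}$ with the required norm bound. You instead precompose with $G_{\halffrac}^{-1}$ on $C_0^\infty$ and apply Riesz once on $L^2_0$ to obtain $k=G_{\halffrac}^{-1}g$ directly, so that $g:=G_{\halffrac}k$ delivers (b) and (c) simultaneously. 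Your route is marginally more economical, but the two arguments are really the same proof read in a different order.
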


\begin{theorem}\label{Thm 2}
There exists a subset $\Omega_0\subset \Omega$ with $\prob(\Omega_0)=1$ such that for any signed measure $\mu$ satisfying the characterizations in Theorem \ref{Thm 1} and any $\omega\in\Omega_0$, we can find a sequence of times $\{t_n, n\geq1\}$ such that for all $f\in C^\infty(M)$,
$$\mu^\omega_{t_n}(f)\to\mu^\omega(f),$$
as $n\to \infty$.
\end{theorem}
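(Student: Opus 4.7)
\medskip

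\noindent\emph{Proof proposal.} My plan is to establish a multidimensional LIL for the eigenfunction projections of $\mu_t$ and then extract the required sequence by diagonalization, using Theorem \ref{key conti est} to upgrade pointwise convergence on eigenfunctions to convergence against every $f\in C^\infty$.

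\emph{Step 1: reduction to eigenfunctions.} Fix $\alpha>\max(d-3/2,d/2)$. Theorem \ref{key conti est} provides an almost sure event on which $\sup_{t\ge 3}\|\mu_t^\omega\|_{(H_0^\alpha)^*}<\infty$. By Proposition \ref{Fourier}(3) the family $\{\phi_j\}_{j\ge 1}$ is total in $H_0^\alpha$, so any bounded sequence in $(H_0^\alpha)^*$ that converges on each $\phi_j$ also converges pointwise on $H_0^\alpha$, in particular on $C_0^\infty$; condition (b) of Theorem \ref{Thm 1}, $\mu(M)=0$, then extends the convergence to all of $C^\infty$. Thus it suffices to produce a sequence $t_n\to\infty$ with $\mu_{t_n}^\omega(\phi_j)\to g_j:=(g,\phi_j)_{L^2}$ for every $j\ge 1$.

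\emph{Step 2: finite-dimensional LIL.} For each $N\ge 1$ set $Y_N(t):=(\mu_t(\phi_1),\dots,\mu_t(\phi_N))$ and
\[
\mathcal{E}_N:=\Big\{a\in\mr^N\ :\ \sum_{n=1}^N \lambda_n a_n^2\le \tfrac{4}{m_0}\Big\}.
\]
I will show that, almost surely, the cluster set of $\{Y_N(t)\}_{t\to\infty}$ equals $\mathcal{E}_N$. Applying Theorem \ref{LIL all} to $f=\sum_{n=1}^N c_n\phi_n$ yields $\limsup_t c\cdot Y_N(t)=\sqrt{(4/m_0)\sum_n c_n^2/\lambda_n}$, which is precisely the support function of $\mathcal{E}_N$, giving the inclusion ``cluster set $\subseteq\mathcal{E}_N$''. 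For the reverse inclusion, observe that $h_n:=2\lambda_n^{-1}\phi_n$ solves $-\tfrac{1}{2}\triangle_M h_n=\phi_n$, so It\^o's formula produces the decomposition
\[
\int_0^t \phi_n(X_s)\,ds = h_n(X_0)-h_n(X_t)+M_t^{(n)},
\]
where the continuous martingales $M^{(n)}$ satisfy $\langle M^{(j)},M^{(k)}\rangle_t=\int_0^t(\nabla h_j,\nabla h_k)(X_s)\,ds$. Ergodicity gives $t^{-1}\langle M^{(j)},M^{(k)}\rangle_t\to 4m_0^{-1}\lambda_j^{-1}\delta_{jk}$ almost surely, so a Dambis--Dubins--Schwarz time change reduces $(M^{(1)},\dots,M^{(N)})$ to a standard $\mr^N$-valued Brownian motion with a diagonal scaling. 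Strassen's functional LIL on $\mr^N$ then identifies $\mathcal{E}_N$ as the cluster set of $(M^{(1)}_t,\dots,M^{(N)}_t)/\sqrt{2t\log\log t}$; the boundary terms $h_n(X_t)/\sqrt{2t\log\log t}$ are negligible since $h_n$ is bounded on the compact manifold $M$.

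\emph{Step 3: diagonalization.} Let $\Omega_0$ be the almost sure event on which Step 2 holds for every $N\ge 1$ and the bound of Step 1 holds. Fix $\omega\in\Omega_0$ and $\mu\in\mathscr{D}$. Condition (d), rewritten in the eigenbasis, reads $\sum_{n\ge 1}\lambda_n g_n^2\le 4/m_0$, so $(g_1,\dots,g_N)\in\mathcal{E}_N$ for every $N$. Choose $t_n\ge n$ inductively so that $|\mu_{t_n}^\omega(\phi_k)-g_k|<1/n$ for all $k\le n$; such $t_n$ exists by Step 2 applied with $N=n$. Then $\mu_{t_n}^\omega(\phi_k)\to g_k$ for every fixed $k$, and Step 1 upgrades this to $\mu_{t_n}^\omega(f)\to\mu(f)$ for every $f\in C^\infty$.

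\emph{Main obstacle.} The substantive step is the reverse inclusion in Step 2: the scalar LIL pins down only the support function of the convex hull of the cluster set, so genuine vector-valued information is required. The Poisson-equation route above is clean because the eigenfunctions are their own Green potentials up to a scalar, making the martingales $M^{(n)}$ well-behaved; an alternative would be a direct Borel--Cantelli construction on geometrically spaced times, exploiting the spectral gap of $\triangle_M$ to quantify quasi-independence of $\mu_t$ across widely separated scales, which is conceptually simpler but computationally heavier.
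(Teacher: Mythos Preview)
Your architecture is exactly the paper's: reduce to the eigenfunctions via the uniform continuity bound (Theorem \ref{key conti est}), invoke a finite-dimensional cluster-set statement for $(\mu_t(\phi_1),\dots,\mu_t(\phi_N))$, diagonalize, and pass back to $C^\infty$ by density. The paper does not prove the finite-dimensional cluster set itself; it quotes it as Lemma \ref{lemma cluster} (Brosamler's Theorem 4.6), whereas you attempt to prove it inline via the Poisson equation and Strassen. That is the only substantive difference.

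One technical point in your Step 2 deserves care. The sentence ``a Dambis--Dubins--Schwarz time change reduces $(M^{(1)},\dots,M^{(N)})$ to a standard $\mr^N$-valued Brownian motion with a diagonal scaling'' is not correct as stated: DDS applied coordinatewise yields one-dimensional Brownian motions $B^{(n)}$, but Knight's theorem guarantees their joint independence only when the cross-brackets $\langle M^{(j)},M^{(k)}\rangle$ vanish identically, and here they are merely $o(t)$. The conclusion you want---that the cluster set of $M_t/\sqrt{2t\log\log t}$ is the ellipsoid determined by the a.s.\ limit $\Sigma$ of $t^{-1}\langle M\rangle_t$---is a standard vector martingale LIL, but it is proved either by strong approximation of $M$ by a Brownian motion (using that $\langle M\rangle_t-\Sigma t=o(t)$) or by the block/Borel--Cantelli argument you allude to at the end, not by a direct DDS reduction. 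Once you replace that one sentence with a citation of the vector martingale LIL (or a genuine approximation argument), your proof is complete and matches the paper's.
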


\bigskip
The rest of this section is devoted to the proof of Theorem \ref{Thm 1} and Theorem \ref{Thm 2} above.

\bigskip

\noindent{\bf {Proof of Theorem \ref{Thm 1}.}} \quad Fix any $\omega\in\Omega$ in (\ref{LIL uniform}), and suppose $\mu$ is a limiting distribution of the family $\{\mu_t^\omega, t\geq 0\}$. First note that $\mu$ can be identified as (or, in another word, extended to) a signed measure on $M$. Indeed, by (\ref{LIL uniform}) and the fact that $G: L^2(M)\to L^2(M)$ is a bounded linear operator, we have for all $f\in C^\infty$,
$$|\mu(f)|\leq \sqrt{\frac{2}{m_0}(f, Gf)_{L^2}}\leq C\sqrt{\frac{2}{m_0}}\|f\|_{L^2}$$
for some constant $C>0$. Since $C^\infty(M)$ is dense in $L^2(M)$, the above inequality implies that $\mu$ can be extended to (and hence be identified as) a bounded linear functional on $L^2(M)$. {Now, the} Riesz representation theorem tells us that there exists a function $g\in L^2(M)$ such that
$$\mu(f)=(g, f)_{L^2}=\int_M fgdm,\quad\quad f\in L^2(M).$$
As a consequence, we can identify $\mu= g\,dm$, a signed measure on $M$ which is absolutely continuous with respect to the volume measure $m$. Clearly, the {Radon-Nikodym} derivative $g$ is  $L^2(M)$. In addition, for any constant function $f$, we have $$\mu_t(f)=0.$$ It  implies $$\mu(f)=0,$$ and in particular for $f\equiv1$, $$\mu(M)=\int_Mg\,dm=0.$$ Hence $g\in L^2_0(M)$.  This proves (a) and (b) of Theorem \ref{Thm 1}. 

Next, we show that $g$ satisfies (c) of Theorem \ref{Thm 1}. For any $f\in C_0^\infty$, denote by
$$h=G_{\frac{1}{2}}f.$$
By (\ref{LIL uniform}), we have
\begin{align*}
\left|\int_M fgdm\right|&=\left|\mu(f)\right|\\
&\leq\sqrt{\frac{2}{m_0}(f, Gf)_{L^2}}\\
&=\sqrt{\frac{2}{m_0}(G_{\frac{1}{2}}f,G_{\frac{1}{2}}f)_{L^2}}\\
&=\sqrt{\frac{2}{m_0}}\|h\|_{L^2}.
\end{align*}
That is
\begin{align}\label{dual argument}
\left|(G^{-1}_{\frac{1}{2}}h, g)_{L^2}\right|=\left|\int_M \left(G^{-1}_{\frac{1}{2}}h\right)\, gdm\right|\leq \sqrt{\frac{2}{m_0}}\|h\|_{L^2},
\end{align}
for all $h=G_{1/2}f, \,f\in C_0^\infty$.  Observe that $C_0^\infty=\cap_{\alpha\geq0}H_0^\alpha$, together with the definition of $H_0^\alpha$, we have $G_{1/2}(C^\infty_0)=C^\infty_0$. Thus we conclude that (\ref{dual argument}) holds valid for all $h\in C^\infty_0$. As a consequence, $g$ is in the domain of $(G^{-1}_{1/2})^*$, the adjoint of $G^{-1}_{1/2}$, for $C_0^\infty$ is dense in $L^2_0$. Thus
$$\left(h, \left(G_\frac{1}{2}^{-1}\right)^*g\right)_{L^2}\leq\sqrt{\frac{2}{m_0}}\|h\|_{L^2}.$$
Again, the density of  $C_0^\infty$ in $L_0^2$ and the above inequality implies
\[ {  \left\| \left(G^{-1}_\halffrac \right)^*g \right\|_{L^2}\leq\sqrt{\frac{2}{m_0}}.   }    \]
Now the proof of (c) is completed by observing that $G^{-1}_{1/2}$ is a self-adjoint operator on $L_0^2$\,.\hfill${\Box}$

\bigskip

Finally, we focus on the proof of Theorem \ref{Thm 2}.  {First,} observe that for $f_1,..., f_n\in L^2_0$, the matrix $((f_i, Gf_j)_{L^2}, i,j=1,...,n)$ is positive definite if and only if $f_1,...,f_n$ are linearly independent. Let $f_1,...,f_n\in L^2_0$ be linearly independent and consider the ellipsoid $E_{f_1,...,f_n}$ defined by
\begin{align}\label{ellipsoid}E_{f_1,...,f_n}=\left\{(z_1,...,z_n)\in\mr^n, \sum_{i,j=1}^na_{ij}z_iz_j\leq1\right\}.\end{align}
Here $(\frac{m_0}{2}a_{ij})$ is the inverse matrix of $((f_i, Gf_j)_{L^2}, i,j=1,...,n)$. 

\begin{remark}\label{fk}
Recall our $\phi_1,\phi_2,...$ in Proposition \ref{Fourier}. Clearly $\phi_1,\phi_2,...$ are linearly independent and $\phi_k\in C_0^\infty$ for all $k\geq1$. Since $$G\phi_k=2\lambda_k^{-1}\phi_k,$$ we have for $f_k=\sqrt{\lambda_k/2}\,\phi_k$,
$$(f_i, Gf_j)_{L^2}=\delta_{ij}.$$
{Throughout} our discussion below we pick this particular choice of $f_k$'s. In this case, for each $n\geq1$, $E_{f_1,...,f_n}$ is simply a ball in $\mr^n$ centered at the origin  with radius $\sqrt{2/m_0}$.
\end{remark}

\begin{lemma}\label{lemma cluster}
Suppose $\alpha>\max\left(d-\frac{3}{2},\frac{d}{2}\right)$, and denote by
$$L_t(f)=\int_0^tf(X_s)ds.$$
There exists a subset $\Omega_0\subset\Omega$ with $\prob(\Omega_0)=1$ such that for all $n\geq1$ 
\begin{align*}
\mr^n-\mathrm{ cluster\ set}\ \frac{(L_t(f_1),...,L_t(f_n))}{\sqrt{2t\log\log t}}=E_{f_1,...,f_n},\quad\quad \mathrm{when}\ t\to\infty.
\end{align*}
\end{lemma}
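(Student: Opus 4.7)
The plan is to prove the two set inclusions separately. The unifying observation is that $c\cdot(L_t(f_1),\ldots,L_t(f_n))=L_t(h_c)$ for $h_c:=\sum_{i=1}^n c_i f_i\in C_0^\infty$, and the support function of $E_{f_1,\ldots,f_n}$ in direction $c$ is precisely $\sqrt{(2/m_0)(h_c,Gh_c)_{L^2}}$, which is the right-hand side of the one-dimensional LIL in Theorem \ref{LIL all} applied to $h_c$.

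For the inclusion ``cluster set $\subseteq E_{f_1,\ldots,f_n}$'', take $\Omega_0$ to be the almost-sure event of Theorem \ref{LIL all}. If $z$ is a cluster point along $t_m\to\infty$, then for every $c\in\mathbb{R}^n$,
\begin{equation*}
|c\cdot z|=\lim_{m\to\infty}\frac{|L_{t_m}(h_c)|}{\sqrt{2t_m\log\log t_m}}\leq \sqrt{\frac{2}{m_0}(h_c,Gh_c)_{L^2}},
\end{equation*}
and support-function duality then forces $z\in E_{f_1,\ldots,f_n}$.

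For the reverse inclusion I would use a Poisson-equation / martingale approximation. Since $G=2(-\triangle_M)^{-1}$ on $L^2_0$ and $f_i\in C_0^\infty$, the potential $u_i:=Gf_i\in C^\infty$ satisfies $\tfrac{1}{2}\triangle_M u_i=-f_i$. Itô's formula applied to $u_i(X_t)$ yields the decomposition
\begin{equation*}
L_t(f_i)=M_t^{(i)}+R_t^{(i)},\qquad M_t^{(i)}:=\int_0^t\langle\nabla u_i(X_s),dX_s\rangle,\qquad R_t^{(i)}:=u_i(X_0)-u_i(X_t),
\end{equation*}
where $R_t^{(i)}$ is uniformly bounded on the compact $M$ and hence negligible after division by $\sqrt{2t\log\log t}$. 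Integration by parts on $M$ combined with the ergodic theorem gives
\begin{equation*}
\frac{1}{t}\langle M^{(i)},M^{(j)}\rangle_t=\frac{1}{t}\int_0^t\langle\nabla u_i,\nabla u_j\rangle(X_s)\,ds\to\frac{1}{m_0}\int_M\langle\nabla u_i,\nabla u_j\rangle\,dm=\frac{2}{m_0}(f_i,Gf_j)_{L^2}
\end{equation*}
almost surely. The multidimensional LIL for continuous local martingales (a Strassen-type theorem, provable via time change to a standard $\mathbb{R}^n$-valued Brownian motion) then identifies the cluster set of $(M_t^{(1)},\ldots,M_t^{(n)})/\sqrt{2t\log\log t}$ as the ellipsoid with support function $c\mapsto\sqrt{(2/m_0)(h_c,Gh_c)_{L^2}}$, which is exactly $E_{f_1,\ldots,f_n}$. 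A countable intersection over $n$ delivers a single full-measure $\Omega_0$.

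The main obstacle is the lower bound: Theorem \ref{LIL all} controls only one-dimensional limsups and does not by itself force density of orbits inside the ellipsoid. The martingale reduction is conceptually clean, but care is needed because the quadratic-variation convergence is only almost sure, so the time change that turns $M^{(i)}$ into a Brownian coordinate is asymptotically but not exactly linear; transferring the ball-density from the Brownian LIL back to $(M_t^{(1)},\ldots,M_t^{(n)})/\sqrt{2t\log\log t}$ requires a uniform approximation on compact time intervals, after which the bounded remainders $R^{(i)}$ are absorbed without further trouble.
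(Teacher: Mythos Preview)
The paper does not actually prove this lemma: its entire proof is the sentence ``This is a restatement of Theorem 4.6 of \cite{Brosamler}.'' Your proposal, by contrast, supplies a self-contained argument along the classical lines---support-function duality combined with the simultaneous one-dimensional LIL of Theorem~\ref{LIL all} for the upper inclusion, and the Poisson-equation/martingale approximation plus a Strassen-type LIL for the lower inclusion. This is essentially how such cluster-set results are established in the underlying literature (Brosamler's Theorem~4.6 itself rests on the Baxter--Brosamler martingale representation), so your route is not so much different from the paper's as it is an unpacking of the black box the paper invokes. What it buys is transparency: one sees that the ellipsoid $E_{f_1,\ldots,f_n}$ is exactly the Strassen ball for the limiting covariance $\tfrac{2}{m_0}(f_i,Gf_j)_{L^2}$ of the martingale part. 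The technical caveats you raise (asymptotic rather than exact time change, absorbing the bounded remainders $R_t^{(i)}$) are genuine but standard; versions of Strassen's theorem for continuous local martingales under the hypothesis $\langle M^{(i)},M^{(j)}\rangle_t/t\to\Sigma_{ij}$ a.s.\ are available and dispose of them directly. One minor remark: the hypothesis $\alpha>\max(d-3/2,d/2)$ plays no role in your argument (nor, for these particular $f_k\in C_0^\infty$, in the conclusion); it is inherited from the formulation in \cite{Brosamler} and is not needed for the line you take.
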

\begin{proof}
This a restatement of Theorem 4.6 of \cite{Brosamler}.
\end{proof}

\bigskip

\noindent{\bf Proof of Theorem \ref{Thm 2}.} \quad We want to show that for any  $\omega\in\Omega_0$ in Lemma \ref{lemma cluster} and any fixed $\mu$, a signed measure satisfying the characterizations in Theorem \ref{Thm 1}, we can find a sequence of times $t_1<t_2<t_3<...$ such that $$\mu_{t_n}\to\mu,\quad\quad\mathrm{as}\ \  n\to\infty,$$ in the space of distributions.  We break our proof into three steps. 
\bigskip

\noindent{\it Step 1.} We first show that for any $n\geq 1$, the vector $$v_n=(\mu(f_1),...,\mu(f_n))$$ is an element in the ball $E_{f_1,...,f_n}$ defined in (\ref{ellipsoid}).

By our choice of $\{f_k, k\geq1\}$ in Remark \ref{fk}, $E_{f_1,...,f_n}$ is a ball in $\mr^n$ centered at the origin with radius $\sqrt{2/m_0}$. Hence the proof reduces to show that the inner product $v_n\cdot\zeta$ in $\mr^n$ satisfies
\begin{align}\label{in ball}|v_n\cdot\zeta|\leq \sqrt{\frac{2}{m_0}},\end{align}
for any unit vector $\zeta=(\zeta^1,...,\zeta^n)\in\mr^n$.  Indeed, we have
\begin{align*}
v_n\cdot\zeta&=\sum_{k=1}^n \mu(f_k)\zeta^k\\
&=\mu\left(\sum_{k=1}^n \zeta^kf_k\right)\\
&=\int_M \left(\sum_{k=1}^n\zeta^kf_k\right) g\, dm\\
&=\left(\sum_{k=1}^n\zeta^kf_k\ ,\  g\right)_{L^2}\\
&=\left(\sum_{k=1}^n\zeta^kG_{\frac{1}{2}}f_k\ ,\  G_{\frac{1}{2}}^{-1}g\right)_{L^2},
\end{align*}
where $g=d\mu/dm$. {Hence, by the Cauchy-Schwarz inequality and our choice of $f_k$ and $\mu$, we obtain }
\begin{align*}
\|v_n\cdot\zeta| &\leq \left\|\sum_{k=1}^n\zeta^k G_{\frac{1}{2}}f_k\right\|_{L^2}\left\|G_{\frac{1}{2}}^{-1}g\right\|_{L^2}\\
&=\sqrt{(\zeta^1)^2+..+(\zeta^n)^2}\,\left\|G_{\frac{1}{2}}^{-1}g\right\|_{L^2}\\
&\leq \sqrt{\frac{2}{m_0}},
\end{align*}
where we used the fact that 
$$\left\|G_{\frac{1}{2}}^{-1}g\right\|_{L^2}\leq \sqrt{\frac{2}{m_0}}.$$
Hence we have proved the desired inequality in (\ref{in ball}).

\bigskip
\noindent{\it Step 2.}  Fix any $\omega\in\Omega_0$ in Lemma \ref{lemma cluster}. We show in this step that we can find a sequence of {times} $t_1<t_2<t_3<...$ such that
\begin{align}\label{convergence v_n}
\mu_{t_n}(f_k)\to \mu(f_k),\quad\quad\mathrm{as}\ n\to\infty,
\end{align}
for all $k\geq1$.

For each fixed $n\geq1$, still let $$v_n=(\mu(f_1),...,\mu(f_n)).$$ 
Recall that for $f\in C_0^\infty$,
$$\mu_t(f)=\frac{\int_0^tf(X_s)ds}{\sqrt{2t\log\log t}}=\frac{L_t(f)}{\sqrt{2t\log\log t}}.$$
Introduce
$$v_{n,t}=(\mu_t(f_1),...,\mu_t(f_n)).$$
By Lemma \ref{lemma cluster} and what we have proved in {\it Step 1},  for each fixed $n$ there exists an increasing sequence of {times} $\{t^n_m, m\geq1\}$ such that
$$|v_{n,t^n_m}-v_n|\to 0,\quad \quad\mathrm{as}\ m\to\infty.$$

Start with $n = 1$.  Because $v_{1,t_m^1} \rightarrow v_1$,  in particular, we can find $t_1 \in \{t_m^1\}$ such that  \[ |{v_{1,t_1} - v_1}| < 1. \]  
For $n = 2$,  $v_{2,t_m^2} \rightarrow v_2$ implies that  we can find $t_2 \in \{t_m^2\}$ such that $t_2>t_1$ and
\[ { |{v_{2,t_2} - v_2}| < \frac{1}{2}. }\]
In general, we can choose $t_n \in \{ t_m^n \}$ such that $t_n>t_{n-1}$ and
\begin{align}|{ v_{n,t_n} - v_n }| < \frac{1}{n}. \label{eq:21}\end{align}

We claim that with such choice of $\{t_n, n\geq1\}$,  the convergence in (\ref{convergence v_n}) holds true for all $k$. Indeed, for each fixed $k$, we observe that $\mu_{t_n}(f_k)-\mu(f_k)$ is the $k$-th entry of the vector $v_{n,t_n}-v_n$ when $n\geq k$. Hence by (\ref{eq:21}),
$$|\mu_{t_n}(f_k)-\mu(f_k)|\leq |v_{n,t_n}-v_n|\leq \frac{1}{n},\quad\quad\ n\geq k.$$
Letting $n\to\infty$, the proof is completed.

We {emphasize} that our choice of $\{t_n\}$ in this step may depend on $\omega\in\Omega_0$.

\bigskip
\noindent{\it Step 3.} We complete our proof of {Theorem} \ref{Thm 2} in this step. That is, we show for any fixed $\omega\in\Omega_0$ in Lemma \ref{lemma cluster} and any $\mu$ in Theorem \ref{Thm 1}, there exists a sequence of times $\{t_n, n\geq1\}$ (that may depend on $\omega$) such that  for all $f\in C^\infty$, $$\mu_{t_n}(f)\to\mu(f).$$ Obviously, since $\mu_t(f)=\mu(f)=0$ for constant $f$, we only need to prove the desired convergence for all $f\in C_0^\infty$.

Let
$$\mathcal{L}=\{f,\  f \mathrm{\ is\ a\ (finite)\ linear\ combination\ of\ } f_1,f_2,...\}.$$
Clearly $\mathcal{L}$ is a dense subset of $H_0^\alpha$ for all $\alpha>0$, for ${ \{\phi_n^\alpha, n\geq1\} }$ is a complete orthonormal system in $H_0^\alpha$.

On the other hand, by what we have proved in {\it Step 2}, together with the linearity of both $\mu_t(\cdot)$ and $\mu(\cdot)$, for each fixed $\omega\in\Omega_0$ there exists $\{t_n, n\geq1\}$ such that for all $f\in\mathcal{L}$,
$$\mu_{t_n}(f)\to \mu(f).$$
By a standard density argument, in order to conclude our proof it suffices to show that, uniformly in $t$, $\mu_t(\cdot)$  is a continuous functional  on the space $H_0^\alpha$ for some $\alpha>0$, and that $\mu(\cdot)$ is a continuous functional on the same $H_0^\alpha$. Fortunately, the desired uniform continuity of $\mu_t(\cdot)$ is given by Theorem \ref{key conti est} for any $\alpha>\max(d-3/2, d/2)$. 

For the continuity of $\mu(\cdot)$, we only need to note that for $f\in H_0^\alpha$,
\begin{align*}
|\mu(f)|&=|(f,g)_{L^2}|\\
&\leq C_1\|f\|_{L^2}\|g\|_{L^2}\\
&\leq C_2\|f\|_{H_0^\alpha}\|g\|_{H_0^1}\\
&\leq 2C_2\sqrt{\frac{1}{m_0}}\|f\|_{H_0^\alpha},
\end{align*}
where we have used the fact that $\|\cdot\|_{L^2}\leq C\|\cdot\|_{H_0^\alpha}$ for $\alpha>0$, and that \[ { \|g\|_{H_0^1}= \sqrt{2}\left\| G^{-1}_{\halffrac}g \right\|_{L^2}\leq 2\sqrt{\frac{1}{m_0}},  }\] by our choice of $\mu$.
The proof of Theorem \ref{Thm 2} is thus completed.\hfill$\Box$

%%%%%%%%%%%%%%%%%%%%%%%%%%%%%%%%%%%%%%%%%%%%%%%%%%%%%%%%%%%%

%%%%%%%%%%%%%%%%%%%%%%%%%%%%%%%%%%%%%%%%%%%%%%%%%%%%%%%%%%%%%%%%%%%%%%%%
%    Bibliographies can be prepared with BibTeX using amsplain,
%    amsalpha, or (for "historical" overviews) natbib style.
\bibliographystyle{amsplain}

\end{document}